\theoremstyle{plain}
\newtheorem{theorem}{Theorem}[section]
\theoremstyle{definition}
\newtheorem{defn}[theorem]{Definition}
\newcommand\cref[1]{Corollary~\ref{cor:#1}}
\title{Finding non-minority balls with majority and plurality queries}
\author{Huilan Chang$^{a,b}$ \and D\'aniel Gerbner$^c$ \and Bal\'azs Patk\'os$^c$
\\
\small $^a$ Department of Applied Mathematics, National University of Kaohsiung, Taiwan, R.O.C. \\ 
\small $^b$ Scientific Multi-simulation Research Center, College of Science, \\
\small National University of Kaohsiung, Taiwan, R.O.C. \\
\small $^c$ Alfr\'ed R\'enyi Institute of Mathematics, Hungarian Academy of Sciences}
\begin{document}
\maketitle

\begin{abstract}
Given a set of $n$ colored balls, a \textit{majority, non-minority or plurality ball} is one whose color class has size more than $n/2$, at least $n/2$ or larger than any other color class, respectively. We describe linear time algorithms for finding non-minority balls using query sets of size $q$ of the following form: the answer to a majority/plurality query $Q$ is a majority/plurality ball in $Q$ or the statement that there is no such ball in $Q$.
\end{abstract}

\section{Introduction}

The basic instance of the Majority Problem is the following. We are given $n$ balls with an unknown 2-coloring. We can pick two balls and ask if they have the same color or not as a query. Our goal is to find a so-called majority ball, which is a ball of a color class of size greater than $n/2$ (i.e. the larger color class), or show that the color classes have the same size, using as few queries as possible. Saks and Werman \cite{SakWer91} showed that the minimum number of queries needed is $n-b(n)$, where $b(n)$ is the number of 1’s in the binary form of $n$. Earlier Fisher and Salzberg \cite{FisSal82} showed that if the number of colors is also unknown, then $\lceil 3n/2\rceil-2$ queries are needed and enough. In case of more colors, a natural question is to look for a ball of the largest color class instead of a majority ball (if exists). Such a ball is called a plurality ball. See \cite{Aig04,AigDeMMon05,GerKatPal13} for results on the plurality problem.

Another 
natural generalization is to consider larger queries. In this case there are several different ways how a query can be answered. They each define different variants of the majority problem. See \cite{Bor14,DeMKraWie12,EppHir18,gkppvw,glv2018,gv2018} for results in case of large queries.

In this paper we are going to study and generalize a variant introduced by Gerbner, Keszegh, P\'alv\"olgyi, Patk\'os, Vizer and Wiener \cite{gkppvw}. Before describing it in details, we introduce the following basic notions.

%\begin{defn} The \textit{type} of a set $S$ is $(s_1,s_2,\dots,s_l)$, where $s_1\ge s_2\ge \dots\ge s_l$ are the sizes of different color classes in $S$.
%\end{defn}

\begin{defn} In a set $X$ of balls, a ball $x\in X$ is a \textit{majority ball} if there are more than $|X|/2$ balls of the color of $x$ in $X$; $x$ is a \textit{non-minority ball} if there are at least $|X|/2$ balls of the color of $x$ in $X$. A ball $x$ is a \textit{plurality ball} if there are more balls of the color of $x$ in $X$, than of any other color. A ball $x$ is an \textit{almost plurality ball} if it is either a plurality ball, or a non-minority ball.
\end{defn}

Note that a ball is almost plurality but not plurality only if there are only two colors, $|X|$ is even and both color classes have size $|X|/2$.
Note furthermore that if there are only two colors in $X$, the majority and the plurality balls are the same.

Gerbner, Keszegh, P\'alv\"olgyi, Patk\'os, Vizer and Wiener \cite{gkppvw} considered the following variant. The answer to a query $Q$ is YES and a majority ball inside $Q$ if such a ball exists, and NO if there is no majority color inside $Q$. They observed that in the case of two colors, using these type of queries of odd size, one can never claim there is no majority ball, as the answers are always YES, thus we can never rule out the possibility that there is only one color. This means we cannot solve the majority problem in the original sense. However, they proved that one can always show a non-minority ball after $O(n)$ queries of size 3.

\begin{defn}
For a \textit{majority query} $Q$ the answer is NO if there is no majority ball in $Q$, otherwise the answer is YES and a majority ball $x$ is also presented.

For a \textit{plurality query} $Q$ the answer is NO if there is no plurality ball in $Q$, otherwise the answer is YES and a plurality ball $x$ is also presented.
\end{defn}

Sometimes we will call an answer consisting of YES and a ball simply YES answer, when we do not care which ball was shown.

%Our goal would be to find a majority/plurality ball in $X$, using majority/plurality queries, but as it was pointed out in \cite{gkppvw}, it is not always possible. Indeed, if there are at most two colors, and we use queries of odd size, there is always a majority ball in the query (which is the same as a plurality ball), thus we never get NO answer. This way we can never claim that there is more then one color. In particular, there is no way to prove that there is no majority in $X$. On the other hand, this might be the case for even $|X|$.

We will consider the following functions. The first letter $M$ or $P$ denotes that we use majority or plurality queries. The second letter 
%$A$ or $N$ denotes if the algorithms are adaptive or non-adaptive. The third letter 
is $B$ if we want to find a non-minority ball in $X$, and $C$ if we try to find an almost-plurality ball. Then in brackets we write the size $|X|$ of the underlying set, the size of the queries and the number of colors.

Using the above notation the results of \cite{gkppvw} can be summarized as $MB(n,3,2)=O(n)$, $MB(n,2k,2)=O(n)$ and $MB(n,2k+1,2)=O(n^2)$. Note that in case of two colors, $M$ and $P$ are the same, while $B$ and $C$ are the same as well. It was conjectured in \cite{gkppvw} that $MB(n,2k+1,2)=O(n)$. Here we will prove this. Moreover, we extend all these results to more colors.

\begin{theorem} For any fixed $q$ and $c$, there exist an integer $n_0=n_0(q,c)$ and a constant $K=K(q,c)$ such that if $n\ge n_0$ holds, then we have $$MB(n,q,c), MC(n,q,c), PB(n,q,c), PC(n,q,c)\le Kn.$$

\end{theorem}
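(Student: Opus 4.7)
The plan is to design a single amortized elimination scheme that handles all four variants $MB$, $MC$, $PB$, $PC$ uniformly. We maintain a weighted multiset $W$ of ``active'' balls, starting from $W=X$ with every ball of weight~$1$. Alongside $W$ we collect a set $D$ of already discarded balls, always grouped into pairs of two distinct colors. Since any pair of balls of two distinct colors contributes equally to two color classes, moving such a pair to $D$ preserves the property that a color class is non-minority (respectively almost-plurality) in the surviving weighted set $W$ if and only if it is non-minority (almost-plurality) in the original set $X$. Once $|W|$ drops below a constant threshold, a brute-force procedure on the heaviest-weight ball of $W$ returns a valid output.

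In each round we pick a set $Q$ of $q$ balls from $W$ and issue one query. If the answer is YES with ball $x$ of color $c_x$, then $c_x$ has more than $q/2$ copies in $Q$ (majority queries), or strictly more copies in $Q$ than every other color (plurality queries). We exploit this to \emph{merge}: absorb several same-color copies into $x$, boosting its weight, and pair each non-$c_x$ ball of $Q$ with one of the absorbed $c_x$-copies, sending the resulting distinct-color pairs to $D$. If the answer is NO, then $Q$ must contain at least two distinct colors among its most frequent ones; produce a distinct-color pair inside $Q$ and send it to $D$, keeping the rest of $Q$ in $W$. In both cases $|W|$ shrinks by at least a positive constant $\delta=\delta(q,c)$, so the process finishes in $O(n/\delta)=O(n)$ queries, proving the theorem with $K=K(q,c)$.

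As a sanity check, this template specializes to resolve the conjecture $MB(n,2k+1,2)=O(n)$ from \cite{gkppvw}: there NO answers do not occur, the YES-merge rule replaces each $q$-set by a single weighted ball of the queried majority color, and the total shrinkage is clearly linear. The main obstacle I expect is implementing the NO-answer case for plurality queries when $c\ge 3$: a NO to a plurality query only asserts a tie among the top colors of $Q$ without identifying which colors tie, so producing a provably distinct-colored pair from $Q$ requires a short sub-procedure (a few extra queries on well-chosen subsets of $Q$) that surfaces two balls of certifiably distinct colors. A secondary difficulty is the merge step for plurality queries when $c_x$ has only a slim plurality in $Q$ (possibly just two copies), since then the number of pairable non-$c_x$ balls is small and a finer accounting is needed to still guarantee $\Omega(1)$ shrinkage per query.
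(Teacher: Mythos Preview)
Your scheme has two genuine gaps, and already the sanity check for $MB(n,2k+1,2)$ does not go through.

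\textbf{The merge step is not implementable.} A YES answer to a query $Q$ reveals exactly one ball $x$; it does not tell you which of the remaining balls of $Q$ share the color $c_x$. So the instruction ``absorb several same-color copies into $x$'' and ``pair each non-$c_x$ ball of $Q$ with one of the absorbed $c_x$-copies'' cannot be carried out: you do not know which balls are which. In the two-color odd-size case your rule degenerates to ``keep $x$, discard $Q\setminus\{x\}$''. But then the $2k$ discarded balls need not split into distinct-color pairs at all (for instance $Q$ could be monochromatic), and the only invariant you can maintain is the Boyer--Moore one: the surviving weighted ball is a \emph{candidate} whose color has nonnegative advantage over the other color \emph{if a majority color exists}. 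That requires a verification phase, and verifying a candidate in this query model is precisely the difficulty the problem poses; your proposal does not address it. This is also why the paper's Algorithm \textsc{WINNER-OUT}, which is essentially your YES-rule, is only a first phase: it produces a structured leftover set $S$, not a certified non-minority ball, and substantial further work (the graph $G$, the case analysis, and the comparison reduction via Theorem~\ref{regi}(iii)) is needed on top of it.

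\textbf{The invariant is false for $c\ge 3$.} Even granting you could form genuine distinct-color pairs, the ``if and only if'' you assert fails in the direction you need. With three colors and counts $(2,2,2)$ no color is non-minority or almost-plurality; after discarding two pairs, each consisting of one ball of color~1 and one of color~2, the remaining set has counts $(0,0,2)$, so color~3 is both non-minority and plurality in $W$ but neither in $X$. Thus a non-minority/almost-plurality ball found in $W$ need not be one in $X$. (The implication does hold in the forward direction, and it is an ``iff'' only when $c=2$, because then every discarded pair contains exactly one ball of each color.)

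For comparison, the paper does not attempt a unified elimination; it proves the four bounds separately. The two-color odd case (Theorem~\ref{fo}) runs \textsc{WINNER-OUT} many times to build a balanced reservoir $S$, then uses a graph-pairing argument on $S$ to extract $k$ balls of each color, and finally reduces to a median-finding problem via the comparison model of Theorem~\ref{regi}(iii). The multi-color majority cases exploit a NO answer to isolate and completely identify color classes one by one. The plurality case is by induction on $c$, again identifying whole color classes. None of these resemble a Boyer--Moore pairing scheme.
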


Note that for $n\le n_0$ there may not be a solution at all. For example if the answer is NO to every majority query, it only means every color class has size at most $q/2$, but does not help find an almost plurality ball.
We prove parts of this theorem separately. In Section 2 we prove $MB(n,2k+1,2)=O(n)$, in Section 3 we prove $MB(n,2k+1,c)=O(n)$ and $MB(n,2k,c)=O(n)$, while in Section 4 we prove $PB(n,q,c)=O(n)$. In all cases the proof implies the same bound for finding an almost plurality ball. We finish the paper with some remarks in Section 5.

\section{Large odd queries and two colors}

First we list the results of Gerbner, Keszegh, P\'alv\"olgyi, Patk\'os, Vizer and Wiener \cite{gkppvw} that we use. They introduced the following reformulation. The two colors are denoted by $0$ and $1$. In this case a median is always a non-minority ball.

\begin{theorem}[\cite{gkppvw}]\label{regi} \textbf{(i)} By asking all the queries of size $2k+1$, we can find a non-minority ball.

 \textbf{(ii)}  By asking all the queries of size 3, we can find a monotone ordering of the balls (note that there is no way to decide if the ordering is monotone increasing or decreasing).

 \textbf{(iii)}  If the answer to a query $\{x,y\}$ can be any of the two valid  statements out of $x\le y$, $y\le x$ or $x\neq y$, then by asking linearly many queries, we can find a median element.

\end{theorem}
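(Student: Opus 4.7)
For part (ii), the plan is to show that the answers of the oracle to all size-$3$ queries determine the coloring up to the global color swap $0\leftrightarrow 1$, from which a monotone ordering follows immediately. The key observation is that in a $2$-colored triple the returned ball lies in the majority-color class of the triple, so the identities of returned balls across all triples severely constrain the possible colorings. I would use an exchange-type argument: suppose two colorings not related by a global swap are both consistent with every oracle answer; then exhibit a triple whose majority color differs between the two colorings and derive a contradiction from the returned ball forced by both colorings.

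For part (i) with $k=1$, part (ii) immediately yields a non-minority ball as the median of the monotone ordering. For $k\ge 2$ the plan is to recover size-$3$ information from size-$(2k+1)$ queries by padding: a query containing a fixed triple $T$ plus $2k-2$ probe balls balanced between the two colors has its overall majority color determined by the majority of $T$, so the returned ball reveals this majority once the probe colors are known. A bootstrapping step produces enough balls of known colors to serve as probes, and then (i) reduces to (ii).

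For part (iii), my plan is a Boyer--Moore-style majority-vote algorithm adapted to the three-answer oracle. The algorithm maintains a candidate ball and a running count while processing balls sequentially. A $\neq$ answer certifies distinct colors and can be used to cancel pairs of opposite color, while an order answer $x\le y$ or $y\le x$ provides weaker but still usable information that can be folded into the candidate update. Invariants tracked during the sweep ensure that the surviving candidate lies in the majority color class, using only linearly many queries.

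The main obstacle is part (ii)'s exchange lemma, because the adversary chooses which majority ball to return and can craft answers to hide small coloring modifications; the proof must rule out sufficiently many such deceptions, and a careful parity or double-counting argument is the likely route. The difficulties in (i) and (iii) are downstream consequences of the same adversarial freedom: in (i) one must still pin down probe colors before padding becomes informative, and in (iii) one must show that the oracle cannot indefinitely suppress $\neq$ answers without eventually supplying enough ordering data to identify the majority side.
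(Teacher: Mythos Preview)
This theorem is not proved in the present paper; it is quoted from \cite{gkppvw}. I can therefore only comment on the soundness of your plan, not compare it to a proof given here.

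Your central lemma for part (ii) --- that the answers to all size-$3$ majority queries determine the $2$-coloring up to the global swap --- is false. With four balls $a,b,c,d$ and oracle answers
\[
\{a,b,c\}\mapsto a,\qquad \{a,b,d\}\mapsto a,\qquad \{a,c,d\}\mapsto c,\qquad \{b,c,d\}\mapsto c,
\]
both the coloring $a,b,c\mapsto 0$, $d\mapsto 1$ and the coloring $a,b\mapsto 0$, $c,d\mapsto 1$ are consistent with every answer, yet they are not swaps of one another. No parity or double-counting argument can repair this, because the statement you are aiming for is simply not true. What \emph{is} true, and is all that (ii) asserts, is that one can output a single ordering that is monotone for \emph{every} coloring consistent with the answers --- in the example, $b,a,c,d$ works for both colorings above and also for the third consistent coloring $a,c,d\mapsto 0$, $b\mapsto 1$ --- but producing such a common ordering requires a different argument from the one you sketch. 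Since your plan for (i) is a reduction to (ii), it inherits this gap; moreover, the ``bootstrapping step'' that is to manufacture balanced probe balls before any color information is available is not described at all.

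For (iii) you correctly anticipate that the adversary may suppress $\neq$ answers entirely: whenever $c(x)\neq c(y)$ exactly one of $x\le y$, $y\le x$ is valid and the adversary may return it, and when $c(x)=c(y)$ only order answers are valid. In that regime every query is an ordinary comparison with adversarial tie-breaking, and the right tool is a linear-time \emph{selection} procedure, not Boyer--Moore. The Boyer--Moore update needs a same/different verdict, and an answer such as $x\le c$ cannot be consistently read as either ``same'' or ``different'' without breaking the invariant: for instance, reading every $\le$ as ``same'' lets the first ball survive as the final candidate regardless of its color, so on input $1,1,0,0,0$ the algorithm outputs a minority ball. Your sketch does not explain how order answers would be folded into a Boyer--Moore counter so as to avoid this.
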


We will also use the following simple algorithm from \cite{gkppvw}. We call it Algorithm WINNER-OUT. We start with a query of size $q$, set aside the answer ball and replace it by a new ball. Then repeat this procedure till all the balls are used (or we get a NO answer, if it is possible). After that, there is a set $S$ of $q-1$ balls never appearing as an answer ball. Observe that every ball not in $S$ belongs to one of the largest color classes in $S$. In particular, in case $q=2k+1$ and $c=2$, the set $S$ has the following property: either there are $k$ balls in $S$ of both colors, or every ball outside $S$ has the same color, and it is the majority color in $S$.

\begin{theorem}\label{fo}
$MB(n,2k+1,2)=O_k(n)$.
\end{theorem}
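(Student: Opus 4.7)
The plan is to reduce the problem to Theorem~\ref{regi}(iii) by simulating each of its noisy comparison queries with $O(1)$ size-$(2k+1)$ majority queries.

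First, I would run Algorithm WINNER-OUT on $X$. This uses $n-2k+O(1)$ queries and leaves a set $S$ of $2k$ balls together with a set $A=X\setminus S$ of $n-2k$ answer balls. By the dichotomy from the excerpt, either (A) $S$ has a strict majority color $c$ and every ball of $A$ is of color $c$, or (B) $S$ has exactly $k$ balls of each color. In case (A), any ball of $A$ is already a majority ball of $X$, and we are done. In case (B), $S$ contributes equally to the two color classes of $X$, so a non-minority ball of $A$ is a non-minority ball of $X$. Either way, it suffices to find a non-minority ball inside $A$.

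Second, assume we have a reference consisting of $k$ balls labeled color $0$ and $k$ labeled color $1$ (which $S$ supplies in case (B) once the colors of its balls are identified). For any pair $\{x,y\}\subseteq A$ I would simulate a noisy comparison via two size-$(2k+1)$ queries: the first pads $\{x,y\}$ with $k$ reference balls of color $0$ and $k-1$ of color $1$, and the second pads with $k-1$ of color $0$ and $k$ of color $1$. A short case analysis shows that the majority of the first query is color $1$ exactly when $x=y=1$, and the majority of the second is color $0$ exactly when $x=y=0$. Reading the identities of the two returned balls --- each of which is either a labeled reference (of known color) or one of $x,y$ --- one can always output a valid reply $x\le y$, $y\le x$, or $x\ne y$ (for instance, whenever $x$ or $y$ itself is returned as a majority ball, this already rules out the configuration in which $\{x,y\}$ would make the default answer invalid). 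Invoking Theorem~\ref{regi}(iii) then produces a non-minority ball of $A$ using $O(n)$ noisy queries, hence $O(n)$ size-$(2k+1)$ majority queries.

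The principal obstacle is bootstrapping the reference: we must identify which balls of $S$ are of which color in case (B) without yet having labeled balls to compare against. Since $|S|=2k$ is a constant depending only on $k$, I would enumerate the $\binom{2k}{k}$ balanced $k$-$k$ labelings of $S$ and run the simulation-based algorithm once for each labeling, producing a constant-sized list of candidate non-minority balls (together with the case-(A) fallback of outputting an arbitrary ball of $A$). Each candidate $y$ can then be verified in $O(n)$ queries by counting the size of its color class using the same balanced-padding test under the candidate's own labeling; the correct labeling produces a candidate whose color class has at least $\lceil n/2 \rceil$ members, and such a candidate is output. Since both the number of labelings and the cost per verification depend only on $k$ and $n$ respectively, the overall query complexity is $O_k(n)$.
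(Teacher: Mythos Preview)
Your reduction to Theorem~\ref{regi}(iii) via two padded queries is exactly the paper's ``third phase,'' and the observation that it suffices to find a non-minority ball in $A=X\setminus S$ is also how the paper proceeds. The gap is in the bootstrapping. You propose to try all $\binom{2k}{k}$ labelings of $S$, obtain one candidate per labeling, and then verify each candidate by counting its color class \emph{under that same labeling}. You argue that the correct labeling yields a candidate whose count is at least $\lceil n/2\rceil$, so some candidate is output. But you never argue soundness: that a \emph{wrong} labeling cannot produce a minority candidate whose count (computed under that wrong labeling) is also at least $\lceil n/2\rceil$. In fact it can. If we are in case~(B) and $L_0$ contains $a$ true-$0$ balls and $k-a$ true-$1$ balls with $0<a<k$, then both $L_0$ and $L_1$ contain balls of both true colors; for any padded query the adversary must only return a true-majority ball, and is therefore free to return one lying in $L_0$ or in $L_1$ as it pleases. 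Hence it can make the ``color'' you read off for every $z$ be whatever it wants, and in particular make a strict-minority candidate pass verification and be output. (There is also a smaller gap: even under the correct labeling you do not explain how to count a color class in $O(n)$ queries; the single query $\{z\}\cup L_0\cup L_1$ is defeated by the adversary answering $z$, and the two-query comparison only yields $\le$, $\ge$, or $\neq$, not ``same/different.'')

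The paper circumvents this by never guessing. It runs WINNER-OUT $m=8k^4$ times to obtain sets $S_1,\dots,S_m$, and then, with $O_k(1)$ queries inside $S=\bigcup_i S_i$, \emph{actually identifies} $k$ balls of each color: each $x\in S$ is paired with a non-minority ball of $S\setminus\{x\}$ (necessarily of the other color, since $S$ is balanced), and a case analysis on the resulting graph extracts a monochromatic $k$-set on each side. Only then is the padding simulation run. The missing ingredient in your proposal is precisely such a certificate-producing step; guess-and-verify does not work against this adversary.
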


\begin{proof}
Observe that we can assume $n$ is large enough, as we are interested only in the order of magnitude.

First we apply Algorithm WINNER-OUT to obtain a set $S_1$. Then we apply the same to $X\setminus S_1$ to obtain $S_2$, and so on till we obtain $S_m$ with $m=8k^4$. This is the first phase of the algorithm. Let $S=\bigcup_{i=1}^m S_i$, then we have that a non-minority ball in $X\setminus S$ is a non-minority ball in $X$. Indeed, either every $S_i$ is balanced, or all the other balls are of the same color, and that is a non-minority color as $n$ is large enough.

From now on we will assume that every $S_i$ is balanced, then by the above it is enough to show a non-minority ball in $X\setminus S$ under this assumption. In the second phase of the algorithm we will find $k$ balls of one color and $k$ balls of the other color. We will only use queries that are completely inside $S$, thus constant many queries.

Let us first take an arbitrary $x\in S$, and remove it from $S$. Then we can find a non-minority ball $y$ in $S\setminus \{x\}$ using \textbf{(i)} of Theorem \ref{regi}. Then $y$ and $x$ have different colors, as $S$ is balanced. We apply this for every vertex, this way each ball has a pair of the other color. Let $G$ be the graph with edges corresponding to these pairs. 

\medskip 

\textbf{Case 1.} There is a ball $x$ with degree more than $2k^2$ in $G$. 

\smallskip

Without loss of generality $x\in S_1$. Then we delete $S_1$. There is a neighbor $y_1$ of $x$ in $S\setminus S_1$. Similarly to the above process, we can find a pair $x_1$ for $y_1$ in $S\setminus S_1$ by finding a non-minority ball in $(S\setminus S_1)\setminus \{y_1\}$ (we can do this, as $S\setminus S_1$ is balanced). Note that this pair may or may not be in $G$ as an edge.

Without loss of generality, $x_1$ is in $S_2$. Then we also delete $S_2$, $x$ has a neighbor $y_2$ among the remaining balls, and we can find a pair $x_2 \in (S\setminus S_1)\setminus S_2$ for $y_2$. We repeat this procedure multiple times. After deleting $S_{k-1}$, there is still at least one neighbor $y_{k-1}$ of $x$ remaining, and it has a pair $x_{k-1}$ among the remaining vertices. This way we found $k$ distinct balls of the color of $x$, and we have already found more than $2k^2$ balls of the other color, thus we are done with the second phase.

\medskip

\textbf{Case 2.} Every ball has degree at most $2k^2$ in $G$.

\smallskip

Observe that every component of $G$ is bipartite. If any component has at least $k$ vertices on both sides, we are done with the second phase again. Thus every component has at most $2k^3$ vertices, hence there are at least $k-1$ components. We take $k-1$ edges from $k-1$ different components. Let $A$ be the set of the $2k-2$ balls that are endpoints of these edges, and let $S'$ be the set we obtain by deleting from $S$ every $S_i$ that contains any ball from $A$. From the sets $S_j$ not deleted, we form $(8k^4-2k+2)/2$ disjoint pairs. For every such pair $S_{j_1}$ and $S_{j_2}$, and ask every query that contains $A$ and three balls from $S_{j_1}\cup S_{j_2}$. 

\smallskip

\textbf{Case 2.1.} For some pair $S_{j_1}$ and $S_{j_2}$, the answer is always one of the three balls from $S_i\cup S_j$.

Then we found a majority ball among those three, thus  we can find a monotone ordering of the balls in $S_i\cup S_j$ by \textbf{(ii)} of Theorem \ref{regi}. The first $k$ balls of this ordering are of the same color, and the next $k$ balls are of the other color, finishing the second phase. 

\smallskip

\textbf{Case 2.2.} For every pair $S_{j_1}$ and $S_{j_2}$, at least one of the answer balls is an element of $A$.

Then there is a ball $x$ in $A$ that occurs as an answer to at least $(8k^4-2k+2)/(4k-4)\ge 2k^3$ queries that all contain $A$ but are disjoint otherwise. Let $B$ be the set of $6k^3$ balls avoiding $A$ from $2k^3$ such queries, then there are at least $4k^3$ balls in $B$ of the color of $x$. In particular a non-minority ball $y_1$ of $B$ is of the color of $x$. Similarly, a non-minority ball $y_2$ in $B\setminus \{y_1\}$ also shares a color with $x$, and so on. This way we can find $2k^3$ balls of the color of $x$.
Their neighborhood in $G$ has size at least $k$, otherwise a vertex in their neighborhood would have degree more than $2k^2$. This way we found $k$ balls of the other color, finishing the second phase.

In the third phase, we denote the colors by $0$ and $1$, following \cite{gkppvw}. We are already given $k$ balls of color $0$ and $k$ balls of color $1$, from $S$. Let us consider two balls $x,y\in X\setminus S$. We ask two queries: $x$ and $y$ together with $k$ balls of color $0$ and $k-1$ balls of color $1$, and after that $x$ and $y$ together with $k-1$ balls of color $0$ and $k$ balls of color $1$. 

$\bullet$ Answer $x$ to the first query means $x\le y$, answer $y$ means $y\le x$, answer $0$ means at least one of $x$ and $y$ has color $0$, while answer $1$ means they both have color $1$, which implies $x\le y$. 

$\bullet$ Similarly, for the second query the answer tells either $x\le y$, or $y\le x$, or that at least one of $x$ and $y$ has color $1$. Combining the two answers, we obtain either $x\le y$, $y\le x$ or $x\neq y$. 

Thus \textbf{(iii)} of Theorem \ref{regi} finishes the proof.
\end{proof}

\section{Majority queries with more colors}

\begin{theorem} For any fixed $k$ and $c$, there exist an integer $n_0=n_0(k,c)$ and a constant $K=K(k,c)$ such that if $n\ge n_0$ holds, then we have $$MB(n,2k+1,c),MC(n,2k+1,c)\le Kn.$$
\end{theorem}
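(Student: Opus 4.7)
The strategy is to reduce the $c$-color problem, in linear time, to a classification problem against a short ``reference palette'' of known-color balls. In \emph{Phase~1 (preprocessing)}, apply Algorithm WINNER-OUT iteratively a constant number of times, obtaining sets $S_1,\ldots,S_m\subseteq X$ of size $2k$ each (for $m=m(k,c)$ large enough); set $S:=\bigcup_i S_i$. If some iteration returns NO, the triggering query contains at least three colors each of multiplicity $\le k$, which we store for Phase~2. Otherwise, unless a single color already dominates the WINNER-OUT answer balls (in which case we output one such winner as a non-minority ball), a pigeonhole argument on those answer balls shows that every color of global density above some $\eps=\eps(k,c)$ is represented by $\Omega(1)$ balls inside $S$.

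\emph{Phase~2 (palette construction).} Using $O(1)$ majority queries on and adjacent to $S$, build disjoint sets $R_1,\ldots,R_t\subseteq S$ with $t\le c$ and $|R_i|=k$, each monochromatic of some color $\gamma_i$, collectively covering every color of global density $\ge\eps$. The construction is iterative: having built $R_1,\ldots,R_{j-1}$, use Phase~3-style queries against the partial palette to locate a ball of a new color inside $S$, and extract $k$ further balls of that color via a case analysis on an auxiliary ``same-color'' multi-partite graph on $S$, analogous to the pairing graph used in the proof of Theorem~\ref{fo} but now admitting up to $c$ color classes.

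\emph{Phase~3 (classification).} For each $x\in X\setminus S$ and each pair $i<j$, the query $\{x\}\cup R_i\cup R_j$ has size $k+k+1=2k+1$, and its answer determines whether $x$ has color $\gamma_i$, $\gamma_j$, or neither (if the returned ball is $x$ itself, one auxiliary query against a third palette set $R_\ell$ resolves the ambiguity; the small-$t$ cases are handled directly, noting that if $t=1$ then the single palette color has density $>1/2$ already). Iterating over at most $\binom{c}{2}$ pairs classifies each ball in $O(1)$ queries, for $O(n)$ queries total. The largest resulting bucket yields an almost plurality ball, proving the bound for $MC$; if its size is $\ge n/2$ it is additionally a non-minority ball, proving the bound for $MB$.

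The main obstacle is \emph{Phase~2}. The 2-color argument of Theorem~\ref{fo} crucially uses that a non-minority ball in a balanced $(2k)$-set minus one element must have the opposite color of the removed ball --- a primitive meaningful only for $c=2$. For $c\ge 3$, palette bootstrapping requires a more delicate case analysis, splitting on whether some $S_i$ has a dominant color (yielding quick palette growth) or colors are spread across all $S_i$'s (forcing queries that mix balls across the $S_i$'s). Keeping this bootstrap $O(1)$ in query count while exhausting all potentially non-minority colors is the heart of the proof.
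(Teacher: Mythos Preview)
Your plan has the right three-phase skeleton, but Phase~2 --- which you yourself flag as ``the heart of the proof'' --- is a genuine gap, and the paper fills it with an idea absent from your outline.

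First, your Phase~1 is more complicated than needed and does not set up Phase~2 correctly. The paper runs WINNER-OUT \emph{once}. If all answers are YES, then every ball outside the $(2k)$-element never-answer set $S$ lies in one of the (at most two) largest color classes of $S$; hence $X\setminus S$ is genuinely $2$-colored and one applies Theorem~\ref{fo} directly to it. There is no need for $m$ iterations or a pigeonhole argument on answer balls. More importantly, in the complementary case where a NO answer occurs, it may occur on the very first query, so your set $S=\bigcup S_i$ can be empty --- and then your Phase~2 plan of extracting the palette from $S$ has nothing to work with.

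Second, the paper's Phase~2 is not a graph argument on a constant-size set at all; it is a \emph{monochromaticity test} applied across a partition of the whole input. The key primitive you are missing is this: once you hold a NO-query $Q$ (which necessarily contains at least three colors, each with at most $k$ balls), then for any $(k{+}1)$-set $B$ inside a block $A_i\cup Q$ of size $O(q)$, the set $B$ is monochromatic if and only if \emph{every} $(2k{+}1)$-query in $A_i\cup Q$ containing $B$ returns YES --- because $Q$ supplies enough off-color balls to kill any majority when $B$ is not monochromatic. Asking all such queries costs $O(1)$ per block, hence $O(n)$ overall, and identifies all color classes of size $\ge k{+}1$ inside each block. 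If some block has two such classes you immediately get $k$ balls of one color, $k$ of another, and a ball of a third color from $Q$; otherwise the single dominant class in each block is compared across blocks (again using $Q$), and either a second color is found or one color is certified non-minority. Your proposed ``multi-partite pairing graph'' never gets off the ground because, as you note, the $c=2$ primitive (non-minority in a balanced set minus a point) has no $c\ge 3$ analogue; the paper sidesteps this entirely with the monochromaticity test. Your Phase~3 classification idea is close to the paper's, though the paper uses the slightly sharper query of $k$ balls of one known color, $k{-}1$ of a second, one of a third, and the unknown ball $x$, which returns YES iff $x$ has the first color --- no auxiliary disambiguation needed.
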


\begin{proof} Our algorithm consists of three phases. In the first phase we use Algorithm WINNER-OUT until we find a NO answer and then the first phase is over. If during Algorithm WINNER-OUT we get only YES answers, let $S$ be the set of balls never appearing as answer. Then either there are $k$ balls of one color and $k$ balls of another color in $S$, and all the other balls are of these two colors, or all the other balls have the same color. In both cases we can delete $S$, and find a non-minority ball among the other balls using $O(n)$ queries by Theorem \ref{fo}. That ball is a non-minority and thus an almost-plurality ball in $X$ as well.

Hence we can assume we found a NO answer to a query $Q$. In the second phase our goal is to find $k$ balls of one color, $k$ balls of another color and a ball of a third color. Let $q=2k+1$. We partition the set $X\setminus Q$ into subsets $A_1,\dots, A_{\lfloor(n-q+1)/cq\rfloor}$ of size $cq$ or $cq+1$. Then for every $i$ we ask all the $q$-subsets of $A_i\cup Q$. Observe that for a set $B\subset A_i\cup Q$ of size $k+1$, there is a majority ball in every query that contains $B$ if and only if $B$ is monochromatic. Indeed, for every color there are at least $k+1$ balls avoiding that color in $Q$, thus we can extend $B$ to a set of size $q$ without majority. Thus we can identify all the color classes of size at least $k+1$ inside $A_i\cup Q$.

If there are two color classes of size at least $k+1$ in $A_i\cup Q$ for some $i$, we can take $k$ balls from each and a ball from another color to obtain the desired configuration. Assume that for every $i$ there is only one such color class $B_i$ in $A_i \cup Q$. We are going to compare the color of $B_i$ and $B_j$. In order to do that, we take $k$ balls for $B_i$ and $k$ balls from $B_j$ and ask every query that consists of these $2k$ balls and one ball from $Q$.
The answer is always YES if and only if $B_i$ and $B_j$ are of the same color.

If we find $B_i$ and $B_j$ are of different colors, the query that was answered NO contains the desired configuration. Otherwise with linearly many queries we obtain that all the $B_i$'s are of the same color, and that is a non-minority, thus almost plurality color.

In the third phase we are given, say, $k$ blue balls, $k$ red balls and a green ball. Then asking 1 green, $k$ blue and $k-1$ red balls together with another ball $x$, the answer shows if $x$ is blue or not. Thus with $O(n)$ queries we find all the blue balls and similarly all the red balls. Among the remaining balls, we take a set $R$ of size $(c-2)k+1$, and ask all the queries consisting of $k$ blue balls and $k+1$ balls from $R$. As $R$ contains $k+1$ balls of the same color, we get a YES answer, finding a monochromatic set of a third color. Taking $k$ of those balls, $k-1$ blue balls and one red ball together with a ball that is neither blue, nor red, we can completely identify this third color class. Then similarly we can identify all the color classes, as long as there are at least $(c-2)k+1$ balls. If there are less than that many balls altogether in the remaining color classes, those cannot be non-minority/almost plurality if $n$ is sufficiently large, thus it is enough to check those classes we have already completely identified. This finishes the proof.
\end{proof}

\begin{theorem} For any fixed $k$ and $c$, there exist an integer $n_0=n_0(k,c)$ and a constant $K=K(k,c)$ such that if $n\ge n_0$ holds, then we have $$MB(n,2k,c),MC(n,2k,c)\le Kn.$$
\end{theorem}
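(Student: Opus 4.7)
The plan is to follow the three-phase structure used in the proof for $(2k+1)$-queries, adjusting the case analysis for the even query size.

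In Phase 1 we apply Algorithm WINNER-OUT to $X$. The key observation for $q=2k$ is that all YES answers in the run share a common majority color $c_1$: a YES at step $i$ produces a different majority in $Q_{i+1}$ only when the $c_i$-count in $Q_i$ is exactly $k+1$ and the replacement ball has color $\ne c_i$, in which case every color in $Q_{i+1}$ has at most $k$ balls and so $Q_{i+1}$ is answered NO. Consequently, if WINNER-OUT completes without a NO then all $n-(2k-1)$ set-aside balls have color $c_1$, which is non-minority for $n$ large, and we are done. Otherwise WINNER-OUT stops at a NO answer to some query $Q$; every color appears at most $k$ times in $Q$, and if any YES preceded $Q$ then $Q$ contains exactly $k$ balls of color $c_1$ together with $k$ balls of other colors.

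Phase 2 produces $k$ balls of each of two distinct colors $C_1, C_2$. If $Q$ contains exactly two colors (necessarily in a $k$-$k$ split), we take them as $C_1, C_2$. If $Q$ contains three or more colors, we imitate the $(2k+1)$-case: partition the unused part of $X\setminus Q$ into blocks $A_i$ of size $cq$, and for each $i$ ask every $2k$-subset of $A_i\cup Q$. As in the previous proof, a $(k+1)$-subset $B\subseteq A_i\cup Q$ is monochromatic iff every $2k$-query containing $B$ is answered YES; the nontrivial direction extends a non-monochromatic $B$ by $k-1$ balls of $(A_i\cup Q)\setminus B$ that avoid saturating any color of $B$, which is possible because each color has at most $k$ balls in $Q$. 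This detects every color class of size $\ge k+1$ in each $A_i\cup Q$, and a constant number of further queries either isolates $k$ balls of two distinct colors or certifies that a single color dominates every $A_i$ and is therefore non-minority in $X$.

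In Phase 3 we use $C_1, C_2$ to classify the remaining balls. For each ball $x$ and each $C\in\{C_1, C_2\}$, the $2k$-query consisting of $k$ balls of $C$, $k-1$ balls of the other color, and $x$ is answered YES with a $C$-ball iff $x$ has color $C$, because in any other case no color attains $k+1$ balls. Two queries per ball partition $X$ into $C_1$-, $C_2$-, and ``other''-balls in $O(n)$ queries. If ``other'' contains fewer than $n/2$ balls, a non-minority ball lies in $C_1\cup C_2$ and is returned; otherwise a subset $R$ of ``other'' of size $(c-2)k+1$ contains at least $k+1$ balls of some third color $C_3$ by pigeonhole, and a constant number of $2k$-queries of the form ``$k-1$ $C_1$-balls plus a $(k+1)$-subset of $R$'' locates such a monochromatic $(k+1)$-subset (signalled by a YES answer). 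Iterating this color-discovery identifies every color class large enough to be non-minority.

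The main obstacle is Phase 2 when $Q$ contains three or more colors, where the ``$B$ monochromatic iff every containing query is YES'' equivalence needs a slightly more delicate extension argument than in the $(2k+1)$-case (since the complement of each color has only $\ge k$ balls in $Q$ rather than $\ge k+1$), followed by a case analysis on the detected color classes across the blocks $A_i$ that parallels the previous proof.
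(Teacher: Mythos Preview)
Your proposal is essentially correct but takes a more roundabout route than the paper. The paper exploits a structural fact you note but do not use: it first secures a YES answer (by querying all $2k$-subsets of a $(ck+1)$-set), so that when the first NO occurs at $Q_j$, the preceding query $Q_{j-1}$ contains \emph{exactly} $k{+}1$ balls of the answer color (blue). Then $Q_{j-1}\setminus\{x\}\cup\{y\}$ is NO if and only if $y$ is not blue, giving a one-query-per-ball test that identifies the entire blue class in $O(n)$ queries; a similar trick handles the balls inside $Q_{j-1}$. From there the paper iterates: take a $(ck+1)$-set of non-blue balls, locate a monochromatic $(k{+}1)$-subset via ``$(k{-}1)$ blue $+$ $(k{+}1)$ unknown'' queries, identify that class with ``$(k{-}1)$ blue $+$ $k$ red $+$ $x$'' queries, and repeat until at most $ck$ balls remain. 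No block decomposition, no comparison across blocks.

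Your approach instead transplants the Phase~2/Phase~3 machinery from the $(2k{+}1)$-proof. This works, and the Phase~3 tests you describe are valid, but two points need tightening. First, your Phase~2 case split ``$Q$ has exactly two colors'' versus ``three or more'' is not algorithmically available: a NO answer reveals nothing about how many colors sit inside $Q$, so you must run the block decomposition unconditionally (it handles both cases). Second, the ``monochromatic $\Leftrightarrow$ every containing query YES'' equivalence needs the extension argument you flag as the main obstacle; it can be made rigorous by choosing the $k{-}1$ extra balls greedily from $Q\setminus B$, at each step avoiding the (at most one) color currently at count $k$---this succeeds because $Q$ has at least $k$ balls avoiding any fixed color. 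With those fixes your argument goes through, but the paper's direct use of the $(k{+}1)$-blue structure of $Q_{j-1}$ is shorter and avoids the block machinery entirely.
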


\begin{proof} 

%We prove the following stronger statement using induction on $c$. We can find bot a majority (or a plurality) ball using $O(n)$ majority queries of size $q=2k$, such that this ball has appeared as an answer to a query. Base step of the induction...

First we obtain a YES answer to a query $Q_0$, by asking all the $(2k)$-subsets of a set of size $ck+1$. Then we run Algorithm WINNER-OUT starting with $Q_0$; assume that the queries following $Q_0$ are $Q_1,Q_2,\dots$ and so on.
%ask $Q\setminus \{x\}\cup \{y\}$ for some $y\not\in Q$. Then we repeat this process of removing the answer ball and adding a new ball, till we get a NO answer. 

Let us consider first the case that Algorithm WINNER-OUT terminates after only YES answers. We claim that in this case all the answer balls are majority balls. Indeed, the answer balls to $Q_i$ and $Q_{i+1}$ have to be of the same color, as $Q_i$ contains at least $k+1$ balls of a color, then $Q_{i+1}$ contains at least $k$ balls of that color, thus it cannot be a minority color. Thus the $n-q+1$ answer balls are of the same color and they form majority.

Assume now that during Algorithm WINNER-OUT we obtain a NO answer, first to a query $Q_j$. Let the color of the answer ball $x$ to the previous query $Q_{j-1}$ be blue. Then $Q_{j-1}$ contains exactly $k+1$ blue balls. Let us ask all the queries of the form $Q_{j-1}\setminus \{x\} \cup \{y\}$ for $y\not\in Q_{j-1}$. The answer is NO if and only if $y$ is not blue. This way we can identify all the blue balls not in $Q_{j-1}$. Let $z$ be the ball in $Q_j\setminus Q_{j-1}$. Then we know $z$ is not blue. Let us ask the queries of the form $Q_{j-1}\setminus \{x\} \cup \{z\}$ for every $x\in Q_{j-1}$. The answer is YES if and only if $x$ is not blue. This way we can completely identify the blue color class.

%, and we know the total number of blue balls. If they are not majority, there are at least $k$ non-blue balls not in $Q_{j-1}$, and we identified those. Now if we remove a ball from $Q_{j-1}$ and add a non-blue ball, there is majority if and only if we removed a non-blue ball. This way we can completely identify the blue color class.

If there are more than $n/2$ blue balls, we are done. Otherwise there is a $(ck+1)$-set $S$ of non-blue balls as $n$ is large enough. We ask all the $(k+1)$-subsets of $S$ together with $k-1$ blue balls. The answer is YES if and only if the $k+1$ balls have the same color. This way we identify one color class of size at least $k+1$, say red, in $S$. Then we ask all the queries consisting of $k-1$ blue balls, $k$ red balls and a ball of unknown color, to find all the red balls. Then repeat it with a third color, and so on. We can do that as long as there are more than $ck$ balls. Hence the number of unidentified balls is at most $ck$. Therefore, if $n$ is at least $c^2k$, we completely identified the two largest color classes, and that is enough to decide if there is a non-minority/almost plurality ball and show one such ball.
%, thus we identify all the color classes of size greater than $ck$.  which is enough to find a non-minority ball if exists, and an almost plurality ball.
%If there are also $k-1$ blue balls not in $Q_{j-1}$, we can ask every element of $Q_{j-1}$ together with $k$ non-blue balls and $k-1$ blue balls, to completely identify the blue color class. 
%If there are less than $k-1$ blue balls, than we choose a $(q+1)$-set not containing any blue balls and force another YES answer there. This way we can repeat this argument with another color. If that color also contains less than $k-1$ balls outside a query, we repeat again. At one point we will find a color, say red, that we can completely identify and has at least $k-1$ balls (or we find a majority ball). By induction on $c$, we can find a majority or plurality ball...
%we can identify that color, and induction on $c$, we might need to compare the winner there with this color, we can do this using the query he won.
%This means that repeating this on the remaining vertices, we can completely identify the color classes of size at least $q$, thus we can find both a majority and a plurality ball.
\end{proof}

\section{Plurality queries}

\begin{theorem}\label{plur} For any fixed $q$ and $c$, there exist an integer $n_0=n_0(q,c)$ and a constant $K=K(q,c)$ such that if $n\ge n_0$ holds, then we have $$PB(n,q,c), PC(n,q,c)\le Kn.$$
\end{theorem}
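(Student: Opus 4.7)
The plan is to follow the three-phase template used in the previous sections: first bootstrap a small, constant-sized pool of balls whose colors are known, then use them as padding to classify each remaining ball with $O(1)$ plurality queries, and finally read off the answer from the resulting color counts. The total query cost will then be $O(n)$.

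Phase 1 (Bootstrap). I would fix a starter set $T\subseteq X$ of constant size $C=C(q,c)$, large enough that by pigeonhole some color class inside $T$ has size at least $q$, and ask all $\binom{C}{q}$ plurality queries within $T$ --- a constant number. From the full list of answers (each either NO, or YES together with a specific witness ball) I want to extract, for each sufficiently large color class of $T$, an ``anchor set'' of at least $\lceil q/2\rceil$ balls that are known to share a color. The pattern of YES answers (whose witness must be from the strict plurality color of the queried subset) together with any NO answers (certifying a tie among several classes) should be enough to partition $T$ into color classes up to relabeling. If instead one color dominates $T$ so strongly that no NO answer arises, we are essentially in the situation of the previous sections and can run a \textsc{Winner-Out}--style loop on all of $X$ to witness that dominance directly.

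Phase 2 (Classification by padding). Given anchor sets $A_1,\dots,A_r$ of $\lceil q/2\rceil$ balls each of distinct known colors $c_1,\dots,c_r$, I would classify each remaining ball $x\in X\setminus T$ with a constant number of plurality queries. The padding (a carefully chosen multiset drawn from $\bigcup_i A_i$ together with $x$) is arranged so that the identity of the returned witness encodes whether $x$ has color $c_i$, color $c_j$, etc., or some altogether new color. This is the plurality analogue of the ``reference-ball'' trick used at the end of Theorem~\ref{fo} and in Section~3. Doing this for each of the $n-|T|$ remaining balls costs $O(n)$ queries. In Phase 3 we read off the color counts: the largest class furnishes an almost plurality ball and, if its size is $\ge n/2$, also a non-minority ball; any residual balls in an ``unknown'' category can be shown (as in Section~3) to be too few to affect the decision once $n\ge n_0$.

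The main obstacle is Phase~1. A NO answer to a plurality query gives only the weak information that at least two color classes are tied for the top of the queried subset, whereas a majority NO bounded every color class by $q/2$; extracting honest color labels from plurality answers is therefore subtler. I expect the resolution to combine (i) a pigeonhole argument on the $\binom{C}{q}$ witness balls to identify the dominant color in $T$ as the one whose balls appear as witnesses most frequently, with (ii) a swapping/perturbation argument, in the spirit of the graph $G$ built in the proof of $MB(n,2k+1,2)$, that uses a fixed NO-answering subset as a template into which balls from outside are substituted one at a time, with the resulting YES/NO transition revealing same/different color relations.
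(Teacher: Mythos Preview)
Your proposal has a genuine gap, and it is precisely the one you flag yourself: Phase~1 is not an algorithm, only a hope. Asking all $\binom{C}{q}$ plurality queries inside a constant-size set $T$ does \emph{not} let you reconstruct the color partition of $T$; a plurality YES answer names one ball but says nothing about which other balls in the query share its color, and a NO answer only certifies a tie at the top without naming the tied classes. Your two suggested fixes do not close the gap. The pigeonhole-on-witnesses idea at best singles out one frequently-witnessed ball, not a set of $\lceil q/2\rceil$ balls known to share a color. And your fallback when $T$ produces no NO answer---``run a \textsc{Winner-Out}--style loop on all of $X$''---is a non sequitur: one color dominating the constant-size set $T$ tells you nothing about $X$.

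The paper's proof resolves this by a structurally different first phase. It proceeds by induction on $c$, with Theorem~\ref{fo} as the base case. \textsc{Winner-Out} is run on \emph{all} of $X$ (not a fixed small set), and the crucial output is not merely a NO-answered query $Q$ but the pair $(Q,x)$ where $x\notin Q$ is the ball removed just before the NO answer, hence known to lie in a largest color class of $Q$. This pairing is what makes the swap queries $Q\cup\{x\}\setminus\{z\}$ and $Q\cup\{y\}\setminus\{z\}$ in Phase~2 decode cleanly: the YES/NO transition as $z$ ranges over $Q$ reveals exactly which $z$ share the color of $x$, and then which $y\in X\setminus Q$ do. If \textsc{Winner-Out} never returns NO, the paper does further swap queries on the last query $Q_1$; if those too are always YES, at most two colors occur outside $Q_1$ and Theorem~\ref{fo} applies; if instead the blue class is confined to $Q_1$, the induction hypothesis for $c-1$ colors finishes the job. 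None of this inductive scaffolding is present in your outline, and without it there is no way to guarantee you ever obtain the anchor sets your Phase~2 needs.
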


\begin{proof} We proceed by induction on $c$. Observe that if $c=2$, then majority and plurality queries are the same. Also, a ball is non-minority if and only if it is almost plurality. Therefore Theorem \ref{fo} yields the base case of the Theorem \ref{plur}.

%We use induction on $c$.
The algorithm consists of multiple phases. In the first phase our goal is to find a query $Q$ that is answered NO and a ball $x\not\in Q$ that belongs to one of the largest classes in $Q$. In order to do that, first we find a YES answer to a query $Q_0$. It is doable by asking all the $q$-subsets of a set of size $qc$. Then we run algorithm WINNER-OUT starting with $Q_0$. If we get a NO answer to a query $Q$ at any point, the previously removed ball belongs to one of the largest color classes in $Q$. 

Assume all the answers are YES. Let $Q_1$ be the last query of algorithm WINNER-OUT,  and let $y$ be the answer ball to $Q_1$ and $S=Q_1\setminus\{y\}$. Let us call the color of $y$ blue. Observe that every ball not in $S$ belongs to one of the largest color classes in $S$.

Then we ask all the queries of the form $Q_1\cup \{u\}\setminus \{v\}$, where $v\in Q_1$, $u\not \in Q_1$. If $u$ is blue, then the answer is YES for every $v$. If $u$ belongs to another of the largest color classes in $S$, and there are at least three colors altogether, then the answer is not always YES. Hence if the answer is always YES, there are two possibilities. In both cases there are at most two colors outside $S$, and those colors have the same number of balls inside $S$, thus we can use Theorem \ref{fo} to find a non-minority, thus almost plurality ball in the entire set $X$ of balls.

Thus we can assume we obtain a NO answer to $Q=Q_1\cup \{u\}\setminus \{v\}$. Moreover, we can identify every blue ball outside $Q_1$ as a ball $x\in X\setminus Q_1$ is blue if and only if the answer to $Q_1\cup \{x\}\setminus \{v\}$ is yes for every $v\in Q_1$. If there is no such ball, we can delete $Q_1$. As all the colors appearing in balls outside $Q_1$ belong to the largest color classes in $S$, they have the same number of balls inside $Q_1$. As there is no blue ball in $X\setminus Q_1$, the number of colors used on balls in $X\setminus Q_1$ is at most $c-1$. So, by the induction hypothesis, we can find a non-minority and an almost plurality ball in $X\setminus Q_1$ with $O(n)$ queries. It is a non-minority/almost plurality ball in $X$, because blue cannot be a non-minority color, nor an almost plurality color since $n$ is large enough. Thus we can assume there is a blue ball $x$ outside $Q_1$.

In the second phase of the algorithm, our goal is to completely identify the color class of the ball $x$ that we found in the first phase. Let us call it blue. First we ask all the queries of the form $Q\cup\{x\}\setminus \{z\}$ with $z\in Q$. If $z$ is blue, the answer is obviously NO, while otherwise YES, thus we find all the blue balls inside $Q$. Now we ask the queries of the form $Q\cup\{y\}\setminus \{z\}$, where $y\not\in Q$ and $z\in Q$. If $y$ is blue, the answer is NO if and only if $z$ is blue. If $y$ is not blue, then the answer may still be NO in the case $z$ is blue (if there are at least 3 largest color classes in $Q$), but it is also NO in case $z$ is from another one of the largest color classes in $Q$. Thus we can find all the blue balls.

In case if there are less than $\lceil q/2\rceil$ blue balls altogether, we delete all the blue balls and repeat this argument to identify another color class. We can do this if we have $qc$ other balls. We repeat this until we identify a color with at least $\lceil q/2\rceil$ balls (it is doable if $n>qc+(c-1)\lceil q/2\rceil$). Without loss of generality we can assume that there are at least $\lceil q/2\rceil$ blue balls.

%by induction we can find a non-minority and an almost plurality ball among the remaining balls with $O(n)$ queries. That ball is clearly non-minority/almost plurality ball in $X$.

We continue with the third phase, where our goal is to find $\lceil q/2\rceil$ balls of another color class. We take a set $Z$ of $cq$ non-blue balls and we ask all of its subsets $T$ of size $\lceil q/2\rceil$ together with a set $B$ of $\lfloor q/2\rfloor$ blue balls. If $q$ is even, then the answer tells us if the $ q/2$ balls are monochromatic or not. This way we can completely identify the color classes of size at least $q/2$ inside $Z$. If $q=2l+1$ is odd, then the answer to $T\cup B$ is NO if and only if $T$ contains exactly $l$ balls of the same (non-blue) color. So if the answers are YES to all the queries $T\cup B$, then $Z$ is monochromatic, so again we are able to identify all large color classes of $Z$. 

Suppose that the answer to $T\cup B$ is NO and $T$ contains $l$ green balls. Then if a ball $z\in Z\setminus T$ is green, then for $t\in T$, $t$ is non-green if and only if the query $\{z\}\cup (T\setminus \{t\})\cup B$  answers YES (the answer ball must be in $\{z\}\cup (T\setminus \{t\})$); if $z$ is non-green, then for $t\in T$, $t$ is green if and only if the query $\{z\}\cup (T\setminus \{t\})\cup B$ answers YES (the answer ball must be in $B$). Then we can identify all green balls in $Z$. Considering all queries $T\cup B$ that answer NO, we can identify a color class of $Z$ that has size at least $\lfloor q/2\rfloor$ (in fact, we can identify all such color classes in $Z$).

%Observe that if the answer to $T\cup B$ is NO, and for some $t\in T, z\in Z\setminus T$ the answer to the query $T'\cup B$ with $T'=T\setminus \{t\}\cup \{z\}$ is YES, then $t$ and $z$ are of different colors and one of these colors has exactly $l$ balls in $T$. This means that any ball $z\in Z$ of which the color class contains at least $l$ balls can be ``separated'' from balls of all other color classes. That means we are able to identify a partition of $Z$ to $C_1,C_2,\dots, C_j$ such that all but at most one of these is a color class of size at least $l$ and maybe one is the union of the color classes smaller than $l$. We claim that we are able to identify the exceptional set, the one that is the union of the small color classes.  Indeed, if there is a $C_i$ of size less than $l$, then $C_i$ is the exceptional set. If all $C_i$'s are of size at least $l$, then for any $i$ one can consider an $l$-subset $C'_i$ of $C_i$ and ask the query $B\cup C'_i\cup \{z\}$, where $z$ is any ball from $Z\setminus C_i$. The answer will be YES if and only if $C_i$ is the exceptional set. We obtained that independently of the parity of $q$, we can identify a color class  of $Z$ that has size at least $\lfloor q/2\rfloor$ (in fact, we can identify all such color classes in $Z$).

In the fourth phase our goal is to identify all the balls of the color class, say red, that we found in the third phase. We ask $\lceil q/2\rceil-1$ red balls together with $\lfloor q/2\rfloor$ blue balls and a ball of unknown color. This tells if the unknown ball is red or not, completely identifying all the red balls.

After that we can similarly identify all the color classes, as long as there are at least $cq$ balls. If there are less than $cq$ balls altogether in the remaining color classes, those cannot be non-minority/almost plurality as $n$ is large enough, thus it is enough to check those classes we have already completely identified. This finishes the proof.
\end{proof}

\section{Concluding remarks}

Note that we did not try to optimize the constant factor in the algorithms presented in this paper. However, it is unlikely that our methods could lead to strong bounds.

There are results for majority and plurality problems where the algorithm has to be non-adaptive, i.e. all the queries have to be asked at the same time, before learning any answers. In case of two colors, a quadratic upper bound for queries of even size and a cubic lower bound for queries of odd size was given in \cite{gkppvw}.

Another variant studied in \cite{gkppvw} is the following. A ball is an $\alpha$-ball in $X$ if there are at least $\alpha(|X|-1)$ other balls of the same color in $X$. Thus a $1/2$-ball is a majority ball. This shows that if the answer to a query $Q$ is an $\alpha$-ball in $Q$ (or No if it does not exist), then we may not be able to find an $\alpha$-ball in the set of all balls. A natural question, studied in \cite{gkppvw}, is the following. For what $\beta$ can we find a $\beta$-ball using only $\alpha$-queries? It would be also interesting to extend these investigations to more colors.

\bigskip

{\large \textbf{Acknowledgement}}: The research of Huilan Chang was supported by the grant  MOST 106-2115-M-390 -004 -MY2. Research of D\'aniel Gerbner and Bal\'azs Patk\'os was supported by the National Research, Development and Innovation Office - NKFIH under the grants K 116769 and KH 130371, by the J\'anos Bolyai Research Fellowship of the Hungarian Academy of Sciences and the Taiwanese-Hungarian Mobility Program of the Hungarian Academy of Sciences.

%random and other variants, also from the old paper


\begin{thebibliography}{99}

\bibitem{Aig04} M. Aigner. Variants of the majority problem. Discrete Applied Mathematics, 137(1):3–-25, 2004.

\bibitem{AigDeMMon05} M. Aigner, G. De Marco, M. Montangero, The plurality problem with three
colors and more, Theoretical Computer Science 337 (2005) 319--330.

\bibitem{Bor14} A. M. Borzyszkowski. Computing majority via multiple queries. Theoretical Computer
Science, 539, 106–-111, 2014.


\bibitem{DeMKraWie12} G. De Marco, E. Kranakis, G. Wiener. Computing Majority with Triple Queries.
Theoretical Computer Science, 461 17–-26, 2012.

\bibitem{EppHir18} D. Eppstein, D. S. Hirschberg. From Discrepancy to Majority. Algorithmica, 80(4)
1278--1297, 2018.

\bibitem{FisSal82} M.J. Fisher, S.L. Salzberg. Finding a Majority Among n Votes. J. Algorithms, 3,
375–-379, 1982.

\bibitem{GerKatPal13} D. Gerbner, G. O. Katona, D. P\'alv\"olgyi, B. Patk\'os. Majority and plurality problems.
Discrete Applied Mathematics, 161(6):813–-818, 2013.

\bibitem{gkppvw} D. Gerbner,  B. Keszegh, D. P\'alv\"olgyi, B.  Patk\'os, M. Vizer, G. Wiener: Finding a non-minority ball with majority answers, Discrete Applied Mathematics 219(11) (2017) 18--31.

\bibitem{glv2018} D. Gerbner, D. Lenger, M. Vizer: A plurality problem with three colors and query size three, arXiv preprint arXiv:1708.05864.

\bibitem{gv2018} D. Gerbner, M. Vizer: Majority problems of large query size, Discrete Applied Mathematics, 2018. doi: 10.1016/j.dam.2018.06.015

\bibitem{SakWer91} M. E. Saks, M. Werman. On computing majority by comparisons. Combinatorica, 11,
383--387, 1991.





\end{thebibliography}
\end{document}